\theoremstyle{plain} 
 \newtheorem{thm}{Theorem}[section]
 \newtheorem{lem}[thm]{Lemma}
 \newtheorem{prop}[thm]{Proposition}
\theoremstyle{definition}
\theoremstyle{remark}
  \newtheorem{rem}[thm]{Remark}
\newcommand{\R}{\mathbb{R}}
\newcommand{\cal}{\mathcal}
\newcommand{\N}{\mathbb{N}}
\newcommand{\Z}{\mathbb{Z}}
\newcommand{\calr}{\mathcal{R}}
\newcommand{\T}{\mathbb{T}}
\renewcommand{\c}{\curvearrowright}
\begin{document}

\title{Inner amenable groups having no stable action}
\author{Yoshikata Kida}
\address{Department of Mathematics, Kyoto University, 606-8502 Kyoto, Japan}
\email{kida@math.kyoto-u.ac.jp}
\date{November 29, 2012}
\subjclass[2010]{20E06, 20E08, 37A20}
\keywords{Inner amenability, orbit equivalence, stability}

\begin{abstract}
We construct inner amenable and ICC groups having no ergodic, free, probability-measure-preserving and stable action.
This solves a problem posed by Jones-Schmidt in 1987.
\end{abstract}

\maketitle


\section{Introduction}

Unless otherwise stated, we assume that a discrete group is countable.
We say that a discrete group $\Gamma$ is {\it inner amenable} if there exists a sequence of non-negative $\ell^1$-functions on $\Gamma$ with unit norm, $(f_n)_{n\in \N}$, such that for any $\gamma \in \Gamma$, we have
\[\lim_{n\to \infty}f_n(\gamma)=0\quad \textrm{and}\quad \lim_{n\to \infty}\sum_{\delta \in \Gamma}|f_n(\gamma^{-1}\delta \gamma)-f_n(\delta)|=0.\]
A discrete group $\Gamma$ is called {\it ICC} if the conjugacy class of any non-neutral element of $\Gamma$ in $\Gamma$ has infinitely many elements, where ``ICC" stands for ``infinite conjugacy classes".
Inner amenability was introduced by Effros \cite{effros} to determine ICC groups whose von Neumann algebra have property Gamma.
Effros proved that any discrete ICC group whose von Neumann algebra has property Gamma is inner amenable, and posed the question whether the converse holds.
Recently, Vaes \cite{vaes} constructed a counterexample to this question.
We refer to \cite{bh} for a survey on inner amenability of groups.

Jones-Schmidt \cite{js} observed an analogue in the setting of orbit equivalence for group actions.
We mean by a {\it p.m.p}.\ action of a discrete group $\Gamma$ a measure-preserving action of $\Gamma$ on a standard probability space, where ``p.m.p." stands for ``probability-measure-preserving''.
We say that a p.m.p.\ action of a discrete group is {\it stable} if the associated equivalence relation is isomorphic to its direct product with the ergodic hyperfinite equivalence relation of type ${\rm II}_1$.
Any discrete group having an ergodic, free, p.m.p.\ and stable action is inner amenable (\cite[Proposition 4.1]{js}).
In Problem 4.2 of \cite{js}, Jones-Schmidt asked the converse:
Does any inner amenable group have an ergodic, free, p.m.p.\ and stable action?
In \cite{kida-stab}, we constructed such a stable action of Baumslag-Solitar groups and of the group of Vaes mentioned above.
The aim of this paper is to give a counterexample to the Jones-Schmidt question by proving the following:

\begin{thm}\label{thm-main}
Let $E$ be a discrete group with property (T) and having a central element $a$ of infinite order.
Let $p$ and $q$ be non-zero integers with $|p|\neq |q|$.
Then the group $\Gamma$ defined by the presentation
\[\Gamma =\langle \, E,\ t\mid ta^pt^{-1}=a^q\, \rangle\]
is inner amenable and ICC, and has no ergodic, free, p.m.p.\ and stable action.
\end{thm}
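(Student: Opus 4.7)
The plan is to verify the three assertions separately, exploiting the HNN structure $\Gamma=E\ast_{\langle a^p\rangle=\langle a^q\rangle}$ and the associated Bass-Serre tree $T$.

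\emph{Inner amenability.} Since $a$ is central in $E$, the infinite cyclic subgroup $A=\langle a\rangle$ is centralized by $E$, and the HNN relation $ta^pt^{-1}=a^q$ shows that $t$ commensurates $A$. Hence $A$ is a commensurated infinite amenable subgroup of $\Gamma$. I would construct the required $\ell^1$-sequence $(f_n)$ by choosing a F{\o}lner sequence for the amenable action of $\Gamma$ on $A\otimes\Z[1/p,1/q]$ induced by conjugation (trivial on $E$, multiplication by $q/p$ under $t$) and transporting these back to finite subsets of $A\subset\Gamma$.

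\emph{ICC.} Using the action on $T$: a hyperbolic element has distinct translation axes under generic conjugation, hence infinite conjugacy class. An elliptic element is conjugate into $E$; iterated conjugation by $t^n$ combined with the Bass-Serre normal form forces any element with finite conjugacy class to lie in $\bigcap_n t^n E t^{-n}=\langle a\rangle$. For $a^k\neq e$, the iterated $t$-conjugates rescale the exponent by factors of $q/p$ (where defined), and the hypothesis $|p|\neq|q|$ ensures the resulting orbit is infinite.

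\emph{No stable action.} Suppose for contradiction that $\Gamma\c(X,\mu)$ is a free, ergodic, p.m.p.\ stable action, and let $\mathcal{R}$ be its orbit equivalence relation. By the Jones-Schmidt characterization, stability is equivalent to the existence of a non-trivial central sequence of projections in the von Neumann algebra $L\mathcal{R}=L^\infty(X)\rtimes\Gamma$. The subalgebra $L\mathcal{S}=L^\infty(X)\rtimes E$ arises from a property-(T) inclusion since $E$ has Kazhdan's property (T), so any central sequence in $L\mathcal{R}$ must be asymptotically contained in the relative commutant of $L\mathcal{S}$. This commutant is controlled by the centralizer of $E$ in $\Gamma$, which contains $\langle a\rangle$ by centrality of $a$ in $E$, so the central sequence essentially lives in the $\langle a\rangle$-generated subalgebra. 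The HNN relation $ta^pt^{-1}=a^q$ then rescales this $\langle a\rangle$-structure under conjugation by $t$: an $a^p$-orbit of length $|p|$ inside a single $\langle a\rangle$-orbit corresponds to an $a^q$-orbit of length $|q|$, so the measure-theoretic density of the central sequence is scaled by $|p|/|q|$. Since centrality forces asymptotic $t$-invariance, this ratio must equal $1$, contradicting $|p|\neq|q|$.

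The main obstacle is the rigidity step: extracting from property (T) of $E$ that every central sequence in $L\mathcal{R}$ is asymptotically supported on the $\langle a\rangle$-piece, and then deriving the $|p|/|q|$-scaling contradiction via the HNN relation. Both steps require a careful analysis in the measured-groupoid and von Neumann algebra framework, most likely along the lines of Popa's deformation-rigidity philosophy adapted to measured equivalence relations.
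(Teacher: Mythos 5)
Your treatment of inner amenability and the ICC property is a plausible sketch and is close in spirit to what the paper actually does, namely verifying the hypotheses of Stalder's criteria for HNN extensions (\cite[Th\'eor\`eme 0.1 and Proposition 0.2]{stalder}); note, though, that your identity $\bigcap_n t^nEt^{-n}=\langle a\rangle$ is not correct as stated --- already $E\cap tEt^{-1}$ is the edge stabilizer $\langle a^q\rangle$, not $\langle a\rangle$ --- so that step would need repair. The serious problem is the third part. A preliminary point: Jones--Schmidt characterize stability via asymptotically central sequences in the full group of $\mathcal{R}$, not via property Gamma of $L\mathcal{R}$; the direction you need (stable $\Rightarrow$ nontrivial central sequences) does hold, since stability gives $L\mathcal{R}\cong L\mathcal{R}\,\bar{\otimes}\,R$, but the contradiction must then actually materialize.

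It does not: the proposed mechanism --- that conjugation by $t$ rescales the ``density'' of a central sequence supported on the $\langle a\rangle$-part by $|p|/|q|$, forcing $|p|=|q|$ --- is refuted by the paper's own Theorem \ref{thm-gamma} (Ozawa). There one exhibits, inside $L\langle a\rangle\cong L^\infty(\T)$, trace-zero unitaries $w_n$ with $\Vert u_tw_nu_t^*-w_n\Vert_2\to 0$ as $n\to\infty$: the dual maps $z\mapsto z^p$ and $z\mapsto z^q$ on $\T$ are both Haar-measure-preserving, and one finds subsets of measure $1/2$ asymptotically invariant under both, so nothing is rescaled and $L\Gamma$ in fact \emph{has} property Gamma. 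The entire point of the theorem is that this property Gamma / central-sequence phenomenon coexists with the failure of stability, so no argument pitched at the level of central sequences in the von Neumann algebra can succeed. The paper locates the obstruction elsewhere: a stable action produces a hyperfinite direct factor, i.e., an amenable \emph{normal} subrelation $f^{-1}(\mathcal{S}\times\mathcal{I})$ of $\mathcal{R}$; an Adams-type boundary argument on the Bass--Serre tree (Proposition \ref{prop-nor}) shows that such a subrelation admits an invariant vertex map, hence on a positive-measure set is contained in the subrelation generated by a conjugate of $E$; and property (T) of $E$ together with Zimmer's cocycle rigidity (\cite[Theorem 9.1.1]{zim-book}) then forces the $\Z$-valued cocycle coming from the hyperfinite factor to be a coboundary, trivializing that factor on a positive-measure set and contradicting that it is of type ${\rm II}_1$. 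None of these ingredients (normality, the tree boundary, cocycle rigidity) appear in your outline, and the step you yourself flag as ``the main obstacle'' is exactly where the approach breaks down.
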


\begin{rem}\label{rem-ex}
Examples of groups with property (T) and having a central element of infinite order are found in \cite[Example 1.7.13]{bhv}.
One of them is the following:
Let $n$ be an integer with $n\geq 2$.
The fundamental group of the Lie group $G=Sp_{2n}(\R)$ is isomorphic to $\Z$.
Let $\widetilde{G}$ denote the universal covering of $G$.
We set $L=Sp_{2n}(\Z)$, which is a lattice in $G$.
The inverse image $\widetilde{L}$ of $L$ in $\widetilde{G}$ then has property (T) and has a central subgroup isomorphic to $\Z$.
\end{rem}

\begin{rem}
A discrete group with property (T) and with infinite center is a {\it non-ICC} counterexample to the Jones-Schmidt question because
\begin{itemize}
\item any discrete group with infinite center is inner amenable; and
\item no ergodic, free and p.m.p.\ action of a discrete group with property (T) is stable (\cite[Proposition 2.10 and Remark 2.11]{sch}).
\end{itemize}
On the other hand, for a discrete ICC group, it is impossible to have inner amenability and property (T) simultaneously (\cite[Corollaire 3 (i)]{bh}).
In this respect, it is meaningful to ask the Jones-Schmidt question for ICC groups, a counterexample to which is the group $\Gamma$ in Theorem \ref{thm-main}.
\end{rem}

The following theorem is obtained by Narutaka Ozawa, and implies that the von Neumann algebra of the group $\Gamma$ in Theorem \ref{thm-main} has property Gamma.

\begin{thm}[N.\ Ozawa]\label{thm-gamma}
Let $E$ be a discrete group having a central element $a$ of infinite order.
Let $p$ and $q$ be non-zero integers with $|p|\neq |q|$.
We define $\Gamma$ as the group with the presentation
\[\Gamma =\langle \, E,\ t\mid ta^pt^{-1}=a^q\, \rangle.\]
Then $\Gamma$ is ICC, and the von Neumann algebra of $\Gamma$ has property Gamma.
\end{thm}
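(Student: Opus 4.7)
The plan is to split the proof into the ICC part and the property Gamma part, both exploiting the HNN-extension structure of $\Gamma$.

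For ICC, I would use Britton's normal form. Every non-identity element of $\Gamma$ is either conjugate into $E$ or acts hyperbolically on the Bass-Serre tree of the HNN extension; in the latter case the pair of fixed ends, and hence the axis, has infinite $\Gamma$-orbit (since $\Gamma$'s action on the space of ends of the tree has no finite orbit), yielding infinitely many conjugates. In the elliptic case, replace $g$ by a representative in $E \setminus \{e\}$ and study $\{t^n g t^{-n}\}_{n \ge 0}$. If $g \in E \setminus \langle a \rangle$, then $g \notin \langle a^p \rangle$, so $t^n g t^{-n}$ is Britton-reduced of length $2n+1$ and in particular never equals $g$; distinct $n$ yield distinct conjugates. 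If $g = a^k$ with $k \ne 0$, an inductive application of Britton's lemma shows that $t^n a^k t^{-n} = a^k$ forces $p^n \mid k$ together with $q^n (k/p^n) = k$, giving $p^n = q^n$ and contradicting $|p| \ne |q|$.

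For property Gamma, the key observation is that $a \in Z(E)$, so $L(\langle a \rangle) \subset Z(L(E))$, and any element of $L(\langle a \rangle)$ automatically commutes with $L(E)$. The sole obstruction is asymptotic commutation with $\lambda_t$. Using the HNN relation $\lambda_t \lambda_{a^{pk}} \lambda_t^{-1} = \lambda_{a^{qk}}$, one is led to the telescoping combination
\[
\phi_n \;=\; \frac{1}{\sqrt{n+1}} \sum_{k=0}^{n} \lambda_{a^{p^{n-k} q^k}},
\]
in which $\lambda_t$-conjugation shifts the index $k \mapsto k+1$ and leaves only a two-term boundary remainder of $L^2$-norm $O(n^{-1/2})$. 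Since $|p| \ne |q|$ ensures the exponents $p^{n-k} q^k$ are pairwise distinct and nonzero, the summands are orthogonal, so $\|\phi_n\|_2 = 1$ and $\tau(\phi_n) = 0$.

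The hard part is operator-norm boundedness: as written $\|\phi_n\|_\infty = \sqrt{n+1}$ (the Fourier sum in $L(\langle a \rangle) \cong L^\infty(\T)$ concentrates near $z=1$), so $\phi_n$ does not yield a bounded central sequence and does not descend to a non-trivial element of the tracial ultrapower $L(\Gamma)^\omega$; Effros' theorem shows that inner amenability alone cannot close this gap in general. I expect that Ozawa's contribution is precisely to refine the naive sum into a uniformly bounded asymptotically central sequence --- either by exploiting the strong lacunarity of $\{p^{n-k} q^k : 0 \le k \le n\}$ (forced by $|p| \ne |q|$) to obtain a Sidon-type replacement, or by working inside a larger abelian subalgebra generated by $\langle a \rangle$ together with suitable $t$-translates along the Bass-Serre tree, where the induced partial $\lambda_t$-action is tame enough to produce a bounded central sequence directly. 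Once such a uniformly bounded trace-zero sequence is in hand, property Gamma for the factor $L(\Gamma)$ follows immediately.
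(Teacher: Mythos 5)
Your reduction of property Gamma to finding trace-zero elements of $L\langle a\rangle$ that asymptotically commute with $u_t$ (commutation with $L(E)$ being automatic since $a$ is central) is exactly the paper's starting point, and your $\phi_n$ is essentially the paper's $v_n=u_n/\Vert u_n\Vert_2$ with $u_n=\sum_{i,j=1}^n u_{a^{p^iq^j}}$. But the step you explicitly leave open --- upgrading this $L^2$-normalized, operator-norm-unbounded almost-central sequence to a genuinely bounded (in fact unitary) one --- is the entire content of Ozawa's argument, and neither of your two guesses for closing the gap (a Sidon-type lacunary replacement, or a larger abelian subalgebra along the tree) is what is done. The actual route is measure-theoretic: identify $L\langle a\rangle$ with $L^\infty(\mathbb{T})$ so that conjugation by $u_t$ intertwines the covering maps $T_p(z)=z^p$ and $T_q(z)=z^q$; apply Schmidt's Namioka-type argument (``(1) $\Rightarrow$ (3)'' of \cite[Proposition 2.3]{sch-ame}) to convert the almost-invariant vectors $v_n$ into measurable sets $A_n\subset\mathbb{T}$ with $\mu(T_k^{-1}(A_n)\bigtriangleup A_n)$ small relative to $\mu(A_n)$ for $k=p,q$; then normalize the measures to exactly $1/2$, via \cite[Lemma 2.3]{js} when $\mu(A_n)$ stays bounded away from $0$ and $1$, and, in the degenerate case $\mu(A_n)\to 0$, by spreading $A_n$ over finitely many preimages under the mixing map $T_2$ following Chifan--Ioana \cite{ci}. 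The functions $w_n=1_{B_n}-1_{\mathbb{T}\setminus B_n}$ are then trace-zero unitaries with $\Vert u_tw_nu_t^*-w_n\Vert_2\le 2\mu(T_p^{-1}(B_n)\bigtriangleup B_n)^{1/2}+2\mu(T_q^{-1}(B_n)\bigtriangleup B_n)^{1/2}\to 0$. Without some such device your argument does not establish property Gamma; as written it only re-derives (a form of) inner amenability, which you yourself note is insufficient.

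On the ICC part, the paper simply invokes Stalder's criterion \cite[Th\'eor\`eme 0.1]{stalder}, verified by checking that the powers $\phi^j$ of the associated isomorphism $\langle a^p\rangle\to\langle a^q\rangle$ have no non-neutral fixed element because $|p|\neq|q|$. Your Britton's-lemma analysis is a reasonable elementary substitute for elliptic elements (and your treatment of $g=a^k$ is essentially the verification of Stalder's hypothesis), but the hyperbolic case rests on the unproved assertion that the $\Gamma$-orbit of every pair of ends of the Bass-Serre tree is infinite; this needs an argument (e.g., that a finite conjugacy class of hyperbolic elements would force a finite-index line stabilizer, which is incompatible with the structure of the tree when $|p|+|q|\ge 3$). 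That case is not a fatal error, only an incomplete sketch; the substantive gap is the one in the property Gamma argument described above.
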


For the reader's convenience, we give a proof of Theorem \ref{thm-gamma} in Section \ref{sec-gamma}.
We thank Narutaka Ozawa for allowing us to include it in this paper.

In Section \ref{sec-ter}, we recall terminology on discrete measured equivalence relations.
In Section \ref{sec-proof}, we prove Theorem \ref{thm-main}.
In Section \ref{sec-gamma}, we prove Theorem \ref{thm-gamma}.


\section{Terminology}\label{sec-ter}

Unless otherwise mentioned, all relations among measurable sets and maps that appear in this paper are understood to hold up to sets of measure zero.
Let $(X, \mu)$ be a standard probability space.
Let $\calr$ be a discrete measured equivalence relation on $(X, \mu)$ of type ${\rm II}_1$.
For $x\in X$, we set $\calr_x=\{ \, (y, x)\in \calr \mid y\in X\, \}$.
We have the measure $\tilde{\mu}$ on $\calr$ defined by
\[\tilde{\mu}(D)=\int_X|D\cap \calr_x|\, d\mu(x)\]
for a Borel subset $D$ of $\calr$, where for a set $S$, we denote by $|S|$ its cardinality.
For a Borel subset $A$ of $X$ with positive measure, we define $\calr|_A$ as the restriction $\calr \cap (A\times A)$, which is a discrete measured equivalence relation on $(A, \mu|_A)$.

Suppose that we have a discrete group $\Gamma$ acting on a standard Borel space $S$.
Let $\rho \colon \calr \to \Gamma$ be a Borel cocycle, i.e., a Borel homomorphism in the sense of discrete measured groupoids.
Let $A$ be a Borel subset of $X$ with positive measure.
We say that a Borel map $\varphi \colon A\to S$ is {\it $(\calr, \rho)$-invariant} if for $\tilde{\mu}$-a.e.\ $(y, x)\in \calr|_A$, the equation $\rho(y, x)\varphi(x)=\varphi(y)$ holds.


\section{Proof of Theorem \ref{thm-main}}\label{sec-proof}

Let $E$ be a discrete group with property (T) and having a central element $a$ of infinite order.
Let $p$ and $q$ be non-zero integers with $|p|\neq |q|$.
We define $\Gamma$ as
\[\Gamma =\langle \, E,\ t\mid ta^pt^{-1}=a^q\, \rangle.\]
This is the HNN extension of $E$ relative to the isomorphism $\phi$ from $\langle a^p\rangle$ onto $\langle a^q\rangle$ sending $a^p$ to $a^q$, where for each $b\in E$, we denote by $\langle b\rangle$ the subgroup generated by $b$.
Let $r>0$ be the greatest common divisor of $p$ and $q$, and set $p_0=p/r$ and $q_0=q/r$.

\medskip

\noindent {\bf Having ICC and inner amenability.}
For any positive integer $j$, the isomorphism $\phi^j$ from $\langle a^{rp_0^j}\rangle$ onto $\langle a^{rq_0^j}\rangle$ has no non-neutral fixed element because $|p|\neq |q|$.
We can thus apply \cite[Th\'eor\`eme 0.1]{stalder}, and see that $\Gamma$ is ICC.
For any positive integer $n$, we have $\phi^j(a^{rp_0^n})=a^{rp_0^{n-j}q_0^j}$ for any $j=1,\ldots, n$.
We can thus apply \cite[Proposition 0.2]{stalder}, and see that $\Gamma$ is inner amenable.

\medskip

\noindent {\bf The Bass-Serre tree.}
Let $T$ be the Bass-Serre tree associated with the decomposition of $\Gamma$ into an HNN extension.
The set of vertices of $T$, denoted by $V(T)$, is defined as $\Gamma /E$.
The set of edges of $T$, denoted by $E(T)$, is defined as $\Gamma /\langle a^q\rangle$, and for each $\gamma \in \Gamma$, the edge corresponding to the coset $\gamma \langle a^q\rangle$ joins the two vertices corresponding to the cosets $\gamma E$ and $\gamma tE$.
The group $\Gamma$ acts on $V(T)$ and $E(T)$ by left multiplication, and acts on $T$ by simplicial automorphisms.
We refer to \cite{serre} for the Bass-Serre theory.

We define a path metric on $T$ so that each edge of $T$ is isometric to the unit interval with the Euclidean metric.
Let $\partial T$ denote the ideal boundary of $T$ as a hyperbolic metric space. 
We define $\Delta T$ as the disjoint union $V(T)\cup \partial T$. 
For any two distinct points $x$, $y$ in $\Delta T$, there exists a unique geodesic path in $T$ from $x$ to $y$, denoted by $l_x^y$.
For $x\in \Delta T$ and a subset $A$ of $V(T)$, we define the set
\[M(x, A)=\{ \, y\in \Delta T\mid l_x^y\cap (A\setminus \{ x\})=\emptyset \, \}.\]
The collection of the sets $M(x, A)$ for all $x\in \Delta T$ and all finite subsets $A$ of $V(T)$ defines an open basis for a topology on $\Delta T$.
With respect to this topology, $\Delta T$ is compact and Hausdorff, and any simplicial automorphism of $T$ uniquely extends to a homeomorphism from $\Delta T$ onto itself.
We refer to \cite[Section 8]{bow} for details on this topology.

We define $\partial_2 T$ as the quotient space of $\partial T\times \partial T$ by the action of the symmetric group of two letters that exchanges the coordinates.
We have the action of $\Gamma$ on $\partial_2 T$ induced by the diagonal action of $\Gamma$ on $\partial T\times \partial T$.
Let $M(\Delta T)$ denote the space of probability measures on $\Delta T$.
We have the natural $\Gamma$-equivariant embedding of $\partial_2 T$ into $M(\Delta T)$.

\begin{prop}\label{prop-nor}
Let $\Gamma \c (X, \mu)$ be a free and p.m.p.\ action, and denote by $\calr$ the associated equivalence relation, i.e., $\calr =\{ \, (\gamma x, x)\in X\times X\mid \gamma \in \Gamma,\ x\in X\, \}$.
We define a Borel cocycle $\rho \colon \calr \to \Gamma$ by $\rho(\gamma x, x)=\gamma$ for $\gamma \in \Gamma$ and $x\in X$.
If $\cal{S}$ is an amenable normal subrelation of $\calr$, then there exists an $(\cal{S}, \rho)$-invariant Borel map $\varphi \colon X\to V(T)$.
\end{prop}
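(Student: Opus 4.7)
The plan is to construct $\varphi$ in three steps: first produce an $(\cal{S},\rho)$-invariant measure-valued map into $M(\Delta T)$ using amenability of $\cal{S}$, then refine it via the tree structure to a Borel map into $V(T) \sqcup \partial_2 T$, and finally rule out the boundary case using normality of $\cal{S}$ in $\calr$ together with property~(T) of $E$.

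\emph{Steps 1 and 2 (the soft part).} Because $\cal{S}$ is amenable and $\Gamma$ acts on the compact Hausdorff space $\Delta T$, Zimmer's amenability criterion yields an $(\cal{S},\rho)$-invariant Borel map $\psi_0 \colon X \to M(\Delta T)$. To refine this, I would use the standard tree dichotomy: for any probability measure $\nu$ on $\Delta T$, either the smallest closed convex subtree carrying $\nu$ is a single bi-infinite geodesic, in which case $\nu$ canonically determines an unordered pair of ends (an element of $\partial_2 T$), or this subtree has a well-defined circumcenter/centroid which is a finite nonempty subset of $V(T)$. A Borel measurable selection from finite subsets of $V(T)$ to $V(T)$ then produces an $(\cal{S},\rho)$-invariant Borel map $\psi \colon X \to V(T) \sqcup \partial_2 T$. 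Setting $X_V := \psi^{-1}(V(T))$, I take $\varphi := \psi$ on $X_V$ and extend arbitrarily outside $X_V$. The proof is thus reduced to showing that $Y := \psi^{-1}(\partial_2 T)$ has measure zero.

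\emph{Step 3 (the main obstacle).} Assume for contradiction that $\mu(Y) > 0$. The strategy is to play normality off against property~(T). By normality, for each $\gamma \in \Gamma$ the translation $x \mapsto \gamma x$ normalizes $\cal{S}$, so $\psi^\gamma(x) := \gamma \cdot \psi(\gamma^{-1}x)$ is also $(\cal{S},\rho)$-invariant. Meanwhile, $\rho$ restricted to $\cal{S}|_Y$ is forced to take values in the setwise $\Gamma$-stabilizer of $\psi(x) \in \partial_2 T$; for a pair of ends of the Bass--Serre tree of our HNN extension, this stabilizer is metabelian, being an extension of a cyclic translation group by the ascending union of conjugates of the edge subgroup $\langle a^q \rangle$, which is genuinely abelian precisely because $|p|\neq|q|$. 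Letting $\gamma$ range over $E$ and assembling the corresponding family $\{\psi^\gamma\}$, one extracts an $E$-equivariant datum on $Y$ valued in an amenable (virtually abelian) target; property~(T) of $E$ then gives the required contradiction.

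The whole difficulty of the proposition is concentrated in Step 3. Steps 1 and 2 are by now standard tools for studying measured equivalence relations in the presence of a tree action. The delicate point is combining normality with property~(T) to transfer rigidity from the subgroup $E$ across the cocycle $\rho$ into the amenable subrelation $\cal{S}$, and crucially exploiting the hypothesis $|p|\neq|q|$ to keep the end-stabilizers strictly amenable and therefore incompatible with~(T).
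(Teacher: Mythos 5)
Your Steps 1 and 2 follow the same Adams-type strategy as the paper: amenability of $\cal{S}$ gives an $(\cal{S},\rho)$-invariant map into $M(\Delta T)$, which is then reduced to a map into $V(T)$ or into $\partial_2 T$. Two points there still need care. The correct dichotomy is ``either a canonical nonempty finite subset of $V(T)$ can be extracted from the measure, or the measure is supported on at most two points of $\partial T$'' (cf.\ \cite[Lemma 5.3]{kida-bs}); and an \emph{arbitrary} Borel selection from finite subsets of $V(T)$ to single vertices destroys $(\cal{S},\rho)$-invariance, since nothing forces the selection to commute with the $\Gamma$-action. These are standard and repairable, e.g., by working with centers of finite sets or with finite subsets directly.

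The genuine gap is Step 3. First, normality of $\cal{S}$ in $\calr$ in the Feldman--Sutherland--Zimmer sense does not say that each translation $x\mapsto\gamma x$ normalizes $\cal{S}$: it provides a countable family of partial isomorphisms with graphs in $\calr$ which generate $\calr$ over $\cal{S}$ and carry $\cal{S}$ to itself. In the intended application $\cal{S}=f^{-1}(\cal{S}_0\times\cal{I})$ for an abstract isomorphism $f$ of $\calr$ with a product relation, and there is no reason for $\Gamma$-translations to preserve it; so the family $\{\psi^{\gamma}\}$ you want to assemble is not available. Second, the claim that $\rho$ on $\cal{S}|_Y$ lands in the setwise stabilizer of $\psi(x)$ is not meaningful: invariance only says $\rho(y,x)$ carries the pair $\psi(x)$ to the pair $\psi(y)$, and since $\partial_2 T$ is uncountable you cannot pass to a positive-measure level set of $\psi$. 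Third, the mechanism ``property (T) of $E$ against an amenable target'' is never specified, and it is not how the contradiction is reached: in this proposition property (T) enters only through the fact that $\Gamma$, containing the infinite property (T) group $E$, is non-amenable (the cocycle-rigidity use of (T) belongs to the subsequent lemma, not here). The paper's actual route, which your sketch omits entirely, is a maximality argument: among all $(\cal{S},\rho)$-invariant maps $A\to\partial_2T$ choose $\varphi_0$ maximizing the measure of the two-point locus; maximality forces every such map to be contained in $\varphi_0$, and combined with normality this promotes $\varphi_0$ to an $(\calr,\rho)$-invariant map; pushing forward $\mu$ yields a $\Gamma$-invariant probability measure $\nu_0$ on $\partial_2T$, the action $\Gamma\c(\partial_2T,\nu_0)$ is amenable because stabilizers of pairs of ends are amenable, and hence $\Gamma$ would be amenable --- the desired contradiction. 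Without the maximality/promotion step the hypothesis of normality is never genuinely used, and your argument does not close.
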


We refer to \cite[Definition 3.15]{kida-bs} for the definition of normal subrelations, which were originally introduced by Feldman-Sutherland-Zimmer \cite{fsz}.
In \cite{kida-bs}, more generally, normal subgroupoids of a discrete measured groupoid are dealt with.

\begin{proof}[Proof of Proposition \ref{prop-nor}]
This proof is based on Adams' argument in \cite[Sections 3--6]{adams}, and is similar to the proof of \cite[Theorem 5.1]{kida-bs}.
We thus give only a sketch of the proof.
For a Borel subset $Y$ of $X$ with positive measure, we define $I_Y$ as the set of all $(\cal{S}, \rho)$-invariant Borel maps from $Y$ into $V(T)$.
To prove the proposition, it suffices to deduce a contradiction under the assumption that there exists a Borel subset $A$ of $X$ with positive measure such that for any Borel subset $B$ of $A$ with positive measure, $I_B$ is empty.

Amenability of $\cal{S}$ implies that there exists an $(\cal{S}, \rho)$-invariant Borel map $\varphi$ from $X$ into $M(\Delta T)$.
By our assumption, for a.e.\ $x\in A$, the measure $\varphi(x)$ is supported on $\partial T$.
Following the proof of \cite[Lemma 5.3]{kida-bs}, we can show that for a.e.\ $x\in A$, the measure $\varphi(x)$ is supported on at most two points of $\partial T$.
It follows that $\varphi$ induces an $(\cal{S}, \rho)$-invariant Borel map from $A$ into $\partial_2 T$.

We define $J$ as the set of all $(\cal{S}, \rho)$-invariant Borel maps from $A$ into $\partial_2 T$, which is non-empty by the argument in the last paragraph.
For each $\varphi \in J$, we set
\[S_{\varphi}=\{ \, x\in A\mid |{\rm supp}(\varphi(x))|=2\, \},\]
where ${\rm supp}(\nu)$ denotes the support of a measure $\nu$.
There exists an element $\varphi_0$ of $J$ with $\mu(S_{\varphi_0})=\sup_{\varphi \in J}\mu(S_{\varphi})$.
For any $\varphi \in J$, we have the inclusion $\varphi(x)\subset \varphi_0(x)$ for a.e.\ $x\in A$, where each element of $\partial_2 T$ is naturally regarded as a subset of $\partial T$ consisting of one or two points.

Using this maximality of $\varphi_0$ and the assumption that $\cal{S}$ is normal in $\calr$, we can show that $\varphi_0$ is $(\calr, \rho)$-invariant.
We extend $\varphi_0$ to the $(\calr, \rho)$-invariant Borel map from the saturation $\calr A=\Gamma A$ into $\partial_2 T$ uniquely.
This map induces a $\Gamma$-invariant probability measure on $\partial_2 T$, denoted by $\nu_0$.
On the other hand, by \cite[Corollary 3.4]{kida-exama}, the action of $\Gamma$ on the measure space $(\partial_2 T, \nu_0)$ is amenable.
By \cite[Proposition 4.3.3]{zim-book}, $\Gamma$ is amenable.
This is a contradiction.
\end{proof}

\begin{lem}
The group $\Gamma$ has no ergodic, free, p.m.p.\ and stable action.
\end{lem}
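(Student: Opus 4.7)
The plan is to argue by contradiction. Suppose $\Gamma$ admits an ergodic, free, p.m.p., and stable action $\Gamma \c (X, \mu)$, and let $\calr$ and $\rho\colon \calr \to \Gamma$ be as in Proposition \ref{prop-nor}. By the characterization of stability in \cite{js}, one extracts from the stability hypothesis an amenable, normal, ergodic subrelation $\cal{S} \subseteq \calr$ of type ${\rm II}_1$. Proposition \ref{prop-nor} then supplies an $(\cal{S}, \rho)$-invariant Borel map $\varphi\colon X \to V(T)$. Pick a vertex $v_0 \in V(T)$ with $Y := \varphi^{-1}(v_0)$ of positive measure; its stabilizer $E_0 := \mathrm{Stab}_\Gamma(v_0)$ is a $\Gamma$-conjugate of $E$, hence also has property (T).

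Next I would observe that $\cal{S}|_Y$ is ergodic on $Y$: for any $\cal{S}|_Y$-invariant Borel $A \subseteq Y$, its $\cal{S}$-saturation $\cal{S}A \subseteq X$ satisfies $\cal{S}A \cap Y = A$, so ergodicity of $\cal{S}$ forces $\mu(A) \in \{0, \mu(Y)\}$. Thus $\cal{S}|_Y$ is an ergodic, amenable relation of type ${\rm II}_1$, and by the $(\cal{S}, \rho)$-invariance of $\varphi$ the restriction $\rho|_{\cal{S}|_Y}$ takes values in $E_0$. Zimmer's cocycle reduction theorem for amenable equivalence relations into property (T) groups then yields a Borel map $\psi\colon Y \to E_0$ and a cocycle $\rho'\colon \cal{S}|_Y \to F$ into a finite subgroup $F \subseteq E_0$ with $\rho'(y, x) = \psi(y)\rho(y, x)\psi(x)^{-1}$.

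Set $\cal{K} := (\rho')^{-1}(e)$. The cocycle identity shows that each $\cal{S}|_Y$-class splits into at most $|F|$ $\cal{K}$-classes, and the ergodicity of $\cal{S}|_Y$ forces $\cal{K}$ to have at most $|F|$ ergodic components on $Y$; let $Y_1 \subseteq Y$ be one of positive measure. For $(y, x) \in \cal{K}$, freeness of $\Gamma \c X$ together with $\rho(y, x) = \psi(y)^{-1}\psi(x)$ gives $\psi(y) \cdot y = \psi(x) \cdot x$ in $X$. Hence the Borel map $\Psi\colon Y \to X$, $\Psi(x) := \psi(x) \cdot x$, is $\cal{K}$-invariant, and so essentially constant on $Y_1$ by ergodicity. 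But then $Y_1$ is contained in a single $E_0$-orbit of $X$ modulo null sets, contradicting $\mu(Y_1) > 0$ since $E_0$-orbits are countable and $\mu$ is non-atomic. The main obstacle is the invocation of Zimmer's cocycle reduction and the subsequent control of the ergodic components of $\cal{K}$; once the cocycle is reduced, the contradiction via $\Psi$ falls out directly from the freeness of the $\Gamma$-action.
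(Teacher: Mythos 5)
Your opening moves track the paper's: extract an amenable normal subrelation from stability, apply Proposition \ref{prop-nor} to get a map to $V(T)$, and localize to a fibre $Y=\varphi^{-1}(v_0)$ on which the cocycle takes values in the property~(T) group $E_0=\Gamma_{v_0}$. But the pivot of your argument --- ``Zimmer's cocycle reduction theorem for amenable equivalence relations into property~(T) groups'' --- does not exist in the direction you invoke it. Zimmer's Theorem 9.1.1, which is what the paper uses, untwists cocycles of a \emph{property (T)} source into an \emph{amenable} target; there is no reduction theorem for cocycles of an amenable (hence hyperfinite) relation into a property~(T) group, and there cannot be: your own endgame would then show that every ergodic infinite amenable subrelation of the $E_0$-orbit relation is supported on a null set, whereas the central subgroup $\langle a\rangle\cong\Z$ of $E_0$ already generates an infinite amenable subrelation of that orbit relation (ergodic for, say, a Bernoulli action). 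This also means that no argument using only ``$\cal{S}|_Y$ is an infinite amenable subrelation of the $E_0$-orbit relation'' can reach a contradiction; some additional structure of $\cal{S}$ must be used. A second, independent gap is your claim that stability provides an \emph{ergodic} amenable normal subrelation: the subrelation it actually provides is $f^{-1}(\cal{S}_0\times\cal{I})$ for an isomorphism $f\colon\calr\to\cal{S}_0\times\cal{T}$ with $\cal{S}_0$ hyperfinite, and this is never ergodic (its ergodic components are the fibres $Y_0\times\{z\}$), yet you need ergodicity of $\cal{S}|_Y$ to make $\Psi$ essentially constant.

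The missing idea, and the way the paper closes the argument, is to exploit the product structure rather than bare amenability. Since $\cal{S}_0$ is ergodic hyperfinite of type ${\rm II}_1$, it is the orbit relation of a free ergodic $\Z$-action, which yields a cocycle $\sigma\colon\cal{S}_0\times\cal{T}\to\Z$ vanishing at $((y_1,z_1),(y_2,z_2))$ only when $y_1=y_2$. Zimmer 9.1.1 now applies in the correct direction: the property~(T) group $\Gamma_v$ acts on $(X,\mu)$ preserving $\mu$, the cocycle $\sigma\circ f$ takes values in the amenable group $\Z$, and $\Z$ has no nontrivial finite subgroups, so $\sigma\circ f(\gamma x,x)=\psi(\gamma x)-\psi(x)$ for all $\gamma\in\Gamma_v$ and some Borel $\psi\colon X\to\Z$. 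On a positive-measure subset $B$ of $A=\varphi^{-1}(v)$ where $\psi$ is constant, every element of $f^{-1}(\cal{S}_0\times\cal{I})|_B$ lies in the $\Gamma_v$-orbit relation and satisfies $\sigma\circ f=0$, hence is trivial --- contradicting the fact that $\cal{S}_0\times\cal{I}$ is of type ${\rm II}_1$. I would rework your proof along these lines; your final step via $\Psi$ and freeness is sound, but it sits on top of a reduction that is unavailable.
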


\begin{proof}
Suppose that there exists an ergodic, free, p.m.p.\ and stable action $\Gamma \c (X, \mu)$.
Let $\calr$ denote the associated equivalence relation, and define a Borel cocycle $\rho \colon \calr \to \Gamma$ as in Proposition \ref{prop-nor}.
Since the action $\Gamma \c (X, \mu)$ is stable, we have two discrete measured equivalence relations on standard probability spaces of type ${\rm II}_1$, $\cal{S}$ on $(Y, \nu)$ and $\cal{T}$ on $(Z, \xi)$, such that $\cal{S}$ is hyperfinite, and $\calr$ is isomorphic to the direct product
\[\cal{S}\times \cal{T} =\{ \, ((y_1, z_1), (y_2, z_2))\in (Y\times Z)^2\mid (y_1, y_2)\in \cal{S},\ (z_1, z_2)\in \cal{T}\, \}\]
on $(Y\times Z, \nu \times \xi)$.
Let $f\colon \calr \to \cal{S}\times \cal{T}$ be an isomorphism.

We define $\cal{I}$ as the trivial equivalence relation on $(Z, \xi)$.
Since $\cal{S}\times \cal{I}$ is amenable and normal in $\cal{S}\times \cal{T}$, there exists an $(f^{-1}(\cal{S}\times \cal{I}), \rho)$-invariant Borel map $\varphi \colon X\to V(T)$ by Proposition \ref{prop-nor}.
Pick a Borel subset $A$ of $X$ with positive measure on which $\varphi$ is constant with the value $v\in V(T)$.
Let $\Gamma_v$ be the stabilizer of $v$ in $\Gamma$.
Let $\calr_v$ be the subrelation of $\calr$ generated by $\Gamma_v$.
The inclusion $f^{-1}(\cal{S}\times \cal{I})|_A<\calr_v$ then holds.

On the other hand, since $\cal{S}$ is ergodic, hyperfinite and of type ${\rm II}_1$, there exists an ergodic, free and p.m.p.\ action $\Z \c (Y, \nu)$ such that the associated equivalence relation is equal to $\cal{S}$.
We have the Borel cocycle from $\cal{S}$ into $\Z$ sending $(ny, y)$ to $n$ for $n\in \Z$ and $y\in Y$.
Define a Borel cocycle $\sigma \colon \cal{S}\times \cal{T}\to \Z$ as the composition of this cocycle with the projection from $\cal{S}\times \cal{T}$ onto $\cal{S}$.
The group $\Gamma_v$ is a conjugate of $E$ in $\Gamma$, and thus has property (T).
By \cite[Theorem 9.1.1]{zim-book}, there exists a Borel map $\psi \colon X\to \Z$ such that for any $\gamma \in \Gamma_v$ and a.e.\ $x\in X$, the equation $\sigma \circ f(\gamma x, x)=\psi(\gamma x)-\psi(x)$ holds.
Pick a Borel subset $B$ of $A$ with positive measure on which $\psi$ is constant.
The equation implies that $f^{-1}(\cal{S}\times \cal{I})|_B$ is trivial.
This is a contradiction.
\end{proof}

The proof of Theorem \ref{thm-main} is completed.


\section{Proof of Theorem \ref{thm-gamma}}\label{sec-gamma}

Let $E$ be a discrete group having a central element $a$ of infinite order.
Let $p$ and $q$ be non-zero integers with $|p|\neq |q|$.
We define $\Gamma$ as the group with the presentation
\[\Gamma =\langle \, E,\ t\mid ta^pt^{-1}=a^q\, \rangle.\]
Applying \cite[Th\'eor\`eme 0.1]{stalder} as in the beginning of Section \ref{sec-proof}, we see that $\Gamma$ is ICC.

Let $L\Gamma$ be the von Neumann algebra of $\Gamma$ with trace $\tau$, generated by the unitary elements $u_\gamma$ with $\gamma \in \Gamma$.
Let $\Vert \cdot \Vert_2$ denote the $L^2$-norm on $L\Gamma$ defined by $\Vert x\Vert_2=\tau(x^*x)^{1/2}$ for $x\in L\Gamma$.
We show that $L\Gamma$ has {\it property Gamma}, that is, there exists a sequence of unitary elements of $L\Gamma$, $(x_n)_{n\in \N}$, such that for any $n\in \N$, $\tau(x_n)=0$, and for any $y\in L\Gamma$, $\Vert x_ny-yx_n\Vert_2\to 0$ as $n\to \infty$.

\medskip

\noindent {\bf Covering maps of the torus.}
We set $\T =\{ \, z\in \mathbb{C}\mid |z|=1\, \}$, and denote by $\mu$ the normalized Haar measure on $\T$.
We identify $\T$ with the dual of the group $\langle a \rangle$.
The algebra $L^{\infty}(\T)$ is then identified with the subalgebra $L\langle a\rangle$ of $L\Gamma$ so that the identity function on $\T$ corresponds to $u_a$.

For $k\in \Z$, we define a map $T_k\colon \T \to \T$ by $T_k(z)=z^k$ for $z\in \T$.
Note that for any non-zero integer $k$ and any measurable subset $A$ of $\T$, we have $\mu(T_k^{-1}(A))=\mu(A)$.
The map $T_k^{-1}$ from the set of measurable subsets of $\T$ into itself therefore induces an isometry from $L^2(\T, \mu)$ into itself.
For each $f\in L^2(\T, \mu)$, the image of $f$ under this isometry is denoted by $T_k^{-1}f$, which is equal to the function $f\circ T_k$.

\medskip

For $m\in \Z$, let $u(m)$ denote the element of $L^{\infty}(\T)$ corresponding to the unitary element $u_{a^m}$ of $L\langle a\rangle$.
For $n\in \N$, set $u_n=\sum_{i, j=1}^nu(p^iq^j)$, and set $v_n=u_n/\Vert u_n\Vert_2$.
For any $n\in \N$, we then have $\Vert v_n\Vert_2=1$ and $\tau(v_n)=0$.
For any $k\in \Z \setminus \{ 0\}$ and any $m\in \Z$, the equation $T_k^{-1}(u(m))=u(km)$ holds.
It implies that for any $k=p, q$, we have $\Vert T_k^{-1}v_n-v_n\Vert_2\to 0$ as $n\to 0$.

\begin{lem}\label{lem-a}
There exists a sequence of measurable subsets of $\T$, $(B_n)_{n\in \N}$, such that for any $n\in \N$, $\mu(B_n)=1/2$, and for any $k=p, q$, we have
\[\lim_{n\to \infty}\mu(T_k^{-1}(B_n)\bigtriangleup B_n)=0.\]
\end{lem}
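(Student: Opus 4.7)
The plan is to pass from the $L^2$-almost-invariance of the trigonometric polynomials $v_n$ (with $\int_\T v_n\,d\mu = 0$ and $\|v_n\|_2 = 1$) to almost-invariance of characteristic functions of measure-$1/2$ subsets of $\T$, via a Connes--Weiss-type thresholding of a real-valued almost-invariant function. First I would reduce to the real case: since $\|\mathrm{Re}(v_n)\|_2^2 + \|\mathrm{Im}(v_n)\|_2^2 = 1$, after extracting a subsequence one of the real or imaginary parts, call it $g_n$, satisfies $\|g_n\|_2 \ge 1/\sqrt 2$. Then $g_n$ is a real non-constant trigonometric polynomial with $\int_\T g_n\,d\mu = 0$ and $\|g_n \circ T_k - g_n\|_2 \to 0$ for $k = p, q$; its distribution function $\phi_n(s) := \mu(\{g_n > s\})$ is continuous because $\{g_n = c\}$ has $\mu$-measure zero for every $c \in \R$, so the median $s_n^*$ with $\phi_n(s_n^*) = 1/2$ is unique, and by Chebyshev $s_n^* \in [-2,2]$.

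Next I would apply the Cavalieri/layer-cake identity
\[
\int_{\R} \mu\bigl(T_k^{-1}\{g_n > s\} \bigtriangleup \{g_n > s\}\bigr)\,ds \;=\; \|g_n \circ T_k - g_n\|_1 \;\le\; \|g_n \circ T_k - g_n\|_2 \;=:\; \epsilon_n^{(k)} \to 0.
\]
Setting $\epsilon_n := \max_k \epsilon_n^{(k)}$, Markov's inequality shows that the set of ``bad'' $s \in \R$ where $F_n^{(k)}(s) := \mu(T_k^{-1}\{g_n > s\} \bigtriangleup \{g_n > s\}) > \sqrt{\epsilon_n}$ for some $k$ has Lebesgue measure at most $2\sqrt{\epsilon_n}$. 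Hence any interval $(s_n^* - \delta_n, s_n^* + \delta_n)$ with $2\delta_n > 2\sqrt{\epsilon_n}$ must contain a ``good'' $s_n$ with $F_n^{(k)}(s_n) \le \sqrt{\epsilon_n} \to 0$. I would then pick $\delta_n$ by a diagonal argument depending on the modulus of continuity of $\phi_n$ at $s_n^*$ for each $n$, so that $\delta_n \ge \sqrt{\epsilon_n}$ and $\phi_n(s_n^* \pm \delta_n) \to 1/2$ simultaneously; the resulting sets $A_n := \{g_n > s_n\}$ satisfy $\mu(A_n) \to 1/2$ and $\mu(T_k^{-1} A_n \bigtriangleup A_n) \to 0$.

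Finally, since Haar measure on $\T$ is non-atomic, I would modify each $A_n$ by a subset (either of $A_n$ or of its complement, depending on sign) of measure $|\mu(A_n) - 1/2| \to 0$ to obtain $B_n$ with $\mu(B_n) = 1/2$ exactly; the symmetric differences change by at most twice this modification, so $\mu(T_k^{-1} B_n \bigtriangleup B_n) \to 0$ is preserved. The hardest part will be the middle step: the choice of $\delta_n$ must simultaneously satisfy $\delta_n \ge \sqrt{\epsilon_n}$ (so the neighborhood meets the good set) and force $\phi_n(s_n^* \pm \delta_n)$ close to $1/2$, even though $\phi_n$ may be arbitrarily steep at its median as $n$ varies. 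Only pointwise continuity of each $\phi_n$ is needed for this, since $\delta_n$ is chosen after inspecting $g_n$.
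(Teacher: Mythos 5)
There is a genuine gap, and it sits exactly at the step you flag as the hardest; pointwise continuity of $\phi_n$ does not close it. For each fixed $n$, continuity of $\phi_n$ at the median $s_n^*$ gives some $\delta>0$ (depending on $n$) with $|\phi_n(s)-1/2|$ small on $(s_n^*-\delta, s_n^*+\delta)$, but your Markov argument only guarantees a good threshold in that interval if $\delta>\sqrt{\epsilon_n}$, and nothing forces these two requirements to be compatible. Concretely, if the distribution of $g_n$ concentrates --- say $g_n$ is $L^2$-close to a normalized centered indicator $(1_{A_n}-\mu(A_n))/\sqrt{\mu(A_n)(1-\mu(A_n))}$ with $\mu(A_n)\to 0$ --- then $\phi_n$ crosses $1/2$ inside a window whose width can be far smaller than $\sqrt{\epsilon_n}$; every threshold outside that window yields a set of measure close to $0$ or to $1$, and the set of bad thresholds (of Lebesgue measure up to $2\sqrt{\epsilon_n}$) can swallow the whole window. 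Such sequences do occur among mean-zero, unit-norm, $L^2$-almost-invariant functions --- this is precisely the ``I-sequence'' branch of Schmidt's dichotomy --- so your argument cannot work at the level of generality at which you run it. To save it you would need a uniform non-concentration estimate, at scale $\sqrt{\epsilon_n}$ around the median, for the specific polynomials $\mathrm{Re}(v_n)$, $\mathrm{Im}(v_n)$, which you do not attempt and which the paper also does not prove.

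The paper instead embraces the dichotomy: quoting the proof of ``(1) $\Rightarrow$ (3)'' in Schmidt's amenability/strong-ergodicity paper, it obtains sets $A_n$ satisfying either (I) $0<c_1\leq\mu(A_n)\leq c_2<1$ with $\mu(T_k^{-1}(A_n)\bigtriangleup A_n)\to 0$ (your thresholding picture, where the Jones--Schmidt argument then adjusts the measure to exactly $1/2$), or (II) $\mu(A_n)\to 0$ with only the \emph{relative} symmetric differences $\mu(T_k^{-1}(A_n)\bigtriangleup A_n)/\mu(A_n)$ tending to $0$. Case (II) is then handled by an idea absent from your outline: the doubling map $T_2$ commutes with $T_p$ and $T_q$ and is mixing (Lemma 4.2), so one can form $B_n=\bigcup_{i=1}^{m_n}T_2^{-l_i}(A_n)$ with $m_n\approx (2\mu(A_n))^{-1}$ suitably chosen exponents $l_i$, boosting the measure to a definite constant while the symmetric difference under $T_k$ grows by at most the factor $m_n$, which is exactly what the relative smallness in (II) absorbs. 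Without a counterpart to this mixing step (or a non-concentration argument replacing it), your proof is incomplete.
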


To prove this lemma, we need the following:

\begin{lem}\label{lem-mix}
Let $k$ be an integer with $k\geq 2$.
Then for any measurable subsets $A$, $B$ of $\T$, we have
\[\lim_{l\to \infty}\mu(T_k^{-l}(A)\cap B)=\mu(A)\mu(B).\]
\end{lem}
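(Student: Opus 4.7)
The plan is to prove that the map $T_k$ is mixing on $(\T, \mu)$ with respect to the normalized Haar measure, which immediately implies the statement. The natural approach is Fourier analysis on the circle.

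First I would expand indicator functions in the orthonormal basis of characters $\chi_m(z) = z^m$, $m \in \Z$, of $L^2(\T, \mu)$. Writing $\mu(T_k^{-l}(A) \cap B) = \int_\T (\mathbf{1}_A \circ T_k^l) \overline{\mathbf{1}_B}\, d\mu = \langle T_k^{-l}\mathbf{1}_A, \mathbf{1}_B\rangle$, the goal becomes showing that $\langle T_k^{-l}f, g\rangle \to \left(\int f\, d\mu\right)\overline{\left(\int g\, d\mu\right)}$ for $f,g \in L^2(\T,\mu)$.

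Second, I would verify this on characters. The key observation is that $T_k^{-1}\chi_m = \chi_{km}$, so inductively $T_k^{-l}\chi_m = \chi_{k^l m}$. Thus
\[
\langle T_k^{-l}\chi_m, \chi_n\rangle = \int_\T z^{k^l m - n}\, d\mu(z) = \delta_{k^l m,\, n}.
\]
If $m = 0$, this equals $\delta_{0,n}$, which is $\int \chi_0\, d\mu \cdot \overline{\int \chi_n\, d\mu}$ for every $n$. If $m \neq 0$, then since $k \geq 2$ we have $|k^l m| \to \infty$ as $l \to \infty$, so for any fixed $n$ the Kronecker delta $\delta_{k^l m, n}$ is zero for all sufficiently large $l$; and in this case the product $\int \chi_m\, d\mu \cdot \overline{\int \chi_n\, d\mu}$ vanishes as well. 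So the convergence holds on all pairs of characters.

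Third, I would extend by linearity to trigonometric polynomials and then by density to arbitrary $f, g \in L^2(\T, \mu)$, using that $T_k^{-l}$ is an isometry of $L^2(\T, \mu)$ to control the approximation uniformly in $l$ by a standard $3\varepsilon$-argument. Applying the result to $f = \mathbf{1}_A$ and $g = \mathbf{1}_B$ yields the lemma. There is no real obstacle here; the only thing to be mildly careful about is the uniform-in-$l$ approximation step, which is routine once one notes that $T_k^{-l}$ preserves the $L^2$-norm.
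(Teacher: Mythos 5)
Your proof is correct, but it takes a genuinely different route from the paper. You establish mixing of $T_k$ by harmonic analysis: the Koopman operator sends the character $\chi_m$ to $\chi_{k^l m}$, the correlation $\langle T_k^{-l}\chi_m,\chi_n\rangle=\delta_{k^l m,\,n}$ eventually vanishes for $m\neq 0$ since $|k^l m|\to\infty$, and the density/isometry argument transfers this to all of $L^2(\T,\mu)$ and in particular to indicator functions. The paper instead codes $(\T,\mu)$ as the Bernoulli space $\prod_{n\in\Z_-}\{0,1,\ldots,k-1\}$ via base-$k$ digit expansion, observes that $T_k^{-1}$ becomes the shift, and invokes mixing of the Bernoulli shift (immediate on cylinder sets from the product structure of the measure). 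Both are standard proofs that $z\mapsto z^k$ is mixing. Your spectral argument is self-contained within Fourier analysis on $\T$ and avoids having to verify that the digit map is a measure isomorphism (injectivity off a null set, pushforward of the product measure to Haar measure); the paper's coding argument is more combinatorial and reduces everything to independence of cylinder sets, at the cost of that verification. The only step in your write-up that deserves the care you already flag is the uniform-in-$l$ approximation, which is indeed handled by the fact that $T_k^{-l}$ is an $L^2$-isometry; no gap there.
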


\begin{proof}
Let $\Z_-$ be the set of negative integers.
Define $X$ as the standard probability space $\prod_{n\in \Z_-}\{ 0, 1,\ldots, k-1\}$ with the product measure of the uniformly distributed probability measure on the set $\{ 0, 1,\ldots, k-1\}$.
The map from $X$ into $\T$ sending each element $(x_n)_{n\in \Z_-}$ of $X$ to the number $\exp(2\pi i \sum_{n\in \Z_-}x_nk^n)$ in $\T$ is an isomorphism between $X$ and $(\T, \mu)$.
Under this isomorphism, for any measurable subset $A$ of $X$, the inverse image of $A$ under $T_k$ is equal to the shifted set
\[\{ \, (x_n)_{n\in \Z_-}\in X \mid (x_{n-1})_{n\in \Z_-} \in A \, \}.\]
The lemma follows from this equality.
\end{proof}

\begin{proof}[Proof of Lemma \ref{lem-a}]
Using the convergence $\Vert T_k^{-1}v_n-v_n\Vert_2\to 0$ as $n\to \infty$ for any $k=p, q$, we follow the proof of ``(1) $\Rightarrow$ (3)" in \cite[Proposition 2.3]{sch-ame}.
We can then find a sequence of measurable subsets of $\T$, $(A_n)_{n\in \N }$, such that either
\begin{enumerate}
\item[(I)] for any $k=p, q$, we have $\mu(T_k^{-1}(A_n)\bigtriangleup A_n)\to 0$ as $n\to \infty$, and there exist numbers $c_1$, $c_2$ with $0<c_1\leq \mu(A_n)\leq c_2<1$ for any $n\in \N$; or
\item[(II)] $\mu(A_n)>0$ for any $n\in \N$, $\mu(A_n)\to 0$ as $n\to \infty$, and for any $k=p, q$, we have $\mu(T_k^{-1}(A_n)\bigtriangleup A_n)/\mu(A_n)\to 0$ as $n\to \infty$.
\end{enumerate}
In case (I), a desired sequence is obtained along the proof of \cite[Lemma 2.3]{js}.

Suppose that case (II) holds.
We apply Chifan-Ioana's argument in the proof of \cite[Lemma 10]{ci} that relies on the proof of \cite[Lemma 14]{ajzn}.
Put $T=T_2$.
Lemma \ref{lem-mix} implies that for any $\varepsilon >0$ and any measurable subsets $A$, $B$ of $\T$, there exists $l\in \N$ with
\[\mu(T^{-l}(A)\cup B)>(1-\varepsilon)-(1-\mu(A))(1-\mu(B)).\]
For any $\varepsilon >0$ and any measurable subset $A$ of $\T$ with $\mu(A)<1$, using this inequality inductively, we can find positive integers $l_i$ for each $i\in \N$ such that for any $m\in \N$, we have
\[\mu \left( \bigcup_{i=1}^mT^{-l_i}(A)\right)>1-\varepsilon -(1-\mu(A))^m.\]

For each $n\in \N$, let $m_n$ be the largest integer that is not larger than $(2\mu(A_n))^{-1}$.
We apply the argument in the last paragraph to $A_n$, and then find positive integers $l_1,\ldots, l_{m_n}$ such that $\mu(B_n)>1-n^{-1}-(1-\mu(A_n))^{m_n}$, where we set
\[B_n=\bigcup_{i=1}^{m_n}T^{-l_i}(A_n).\]
Since $\mu(A_n)\to 0$ as $n\to \infty$, for any sufficiently large $n\in \N$, we have $1-100^{-1}-e^{-1/2}\leq \mu(B_n)\leq m_n\mu(A_n)\leq 1/2$.
On the other hand, for any $k=p, q$, the inequality
\begin{align*}
\mu(T_k^{-1}(B_n)\bigtriangleup B_n)&\leq \sum_{i=1}^{m_n}\mu(T_k^{-1}(T^{-l_i}(A_n))\bigtriangleup T^{-l_i}(A_n))=m_n\mu(T_k^{-1}(A_n)\bigtriangleup A_n)\\
&\leq \frac{\mu(T_k^{-1}(A_n)\bigtriangleup A_n)}{2\mu(A_n)}
\end{align*}
holds.
The proof in case (II) is therefore reduced to that of case (I).
\end{proof}

Let $(B_n)_{n\in \N }$ be the sequence of measurable subsets of $\T$ in Lemma \ref{lem-a}.
For each $n\in \N$, define a function $w_n$ in $L^{\infty}(\T)$ by $w_n(z)=1$ for $z\in B_n$, and $w_n(z)=-1$ for $z\in \T \setminus B_n$.
The function $w_n$ is then a unitary element in $L\langle a \rangle$ with $\tau(w_n)=0$, and we have
\begin{align*}
\Vert u_tw_nu_t^*-w_n\Vert_2&\leq \Vert w_n-T_p^{-1}w_n\Vert_2+\Vert u_t(T_p^{-1}w_n)u_t^*-w_n\Vert_2\\
&=\Vert w_n-T_p^{-1}w_n\Vert_2+\Vert T_q^{-1}w_n-w_n\Vert_2\\
&=2\mu(T_p^{-1}(B_n)\bigtriangleup B_n)^{1/2}+2\mu(T_q^{-1}(B_n)\bigtriangleup B_n)^{1/2}.
\end{align*}
It follows that $\Vert u_tw_nu_t^*-w_n\Vert_2\to 0$ as $n\to 0$.
For any $\gamma \in E$ and any $n\in \N$, the equation $u_{\gamma} w_nu_{\gamma}^*=w_n$ holds because $a$ is a central element of $E$.
It turns out that $L\Gamma$ has property Gamma.

The proof of Theorem \ref{thm-gamma} is completed.


\end{document}